\documentclass[12pt]{amsart}
\usepackage{amsthm}
\usepackage{amsmath}
\usepackage[margin=1in]{geometry}
\usepackage{hyperref}
\begin{document}
\newcommand{\Q}{{\mathbb Q}}
\newcommand{\C}{{\mathbb C}}
\newcommand{\R}{{\mathbb R}}
\newcommand{\Z}{{\mathbb Z}}
\newcommand{\F}{{\mathbb F}}
\renewcommand{\wp}{{\mathfrak p}}
\renewcommand{\P}{{\mathbb P}} 
\renewcommand{\O}{{\mathcal O}}
\newcommand{\Pic}{{\rm Pic\,}}
\newcommand{\Ext}{{\rm Ext}\,}
\newcommand{\rank}{{\rm rk}\,}
\newcommand{\sbull}{{\scriptstyle{\bullet}}}
\newcommand{\bX}{X_{\overline{k}}}
\newcommand{\ch}{\operatorname{CH}}
\newcommand{\tors}{\text{tors}}
\newcommand{\cris}{\text{cris}}
\newcommand{\alg}{\text{alg}}
\let\isom=\simeq
\let\rk=\rank
\let\tensor=\otimes

\newtheorem{theorem}[equation]{Theorem}      
\newtheorem{lemma}[equation]{Lemma}          %
\newtheorem{corollary}[equation]{Corollary}  
\newtheorem{proposition}[equation]{Proposition}
\newtheorem{scholium}[equation]{Scholium}

\theoremstyle{definition}
\newtheorem{conj}[equation]{Conjecture}
\newtheorem*{example}{Example}
\newtheorem{question}[equation]{Question}

\theoremstyle{definition}
\newtheorem{remark}[equation]{Remark}

\numberwithin{equation}{section}

\newcommand{\be}{\begin{equation}}
\newcommand{\ee}{\end{equation}}
\newcommand{\bes}{\begin{equation*}}
\newcommand{\ees}{\end{equation*}}
\newcommand{\bea}{\begin{eqnarray}}
\newcommand{\eea}{\end{eqnarray}}
\newcommand{\beas}{\begin{eqnarray}}
\newcommand{\eeas}{\end{eqnarray}}
\newcommand{\bth}{\begin{theorem}}
\newcommand{\eth}{\end{theorem}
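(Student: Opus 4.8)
The plan is to verify the final line of the preamble, the definition \texttt{\textbackslash newcommand\{\textbackslash eth\}\{\textbackslash end\{theorem\}\}}, which asserts that the freshly introduced control sequence named \texttt{\textbackslash eth} expands to the token list \texttt{\textbackslash end\{theorem\}} and hence closes a theorem environment opened by its companion \texttt{\textbackslash bth}. The proof is by direct appeal to the operational semantics of \texttt{\textbackslash newcommand}, so no nontrivial mathematics is required: the entire content concerns macro expansion and category codes, and the target is a definitional identity rather than a deduction from hypotheses.

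First I would check the precondition that \texttt{\textbackslash eth} is previously undefined, which is exactly what \texttt{\textbackslash newcommand} demands and what guarantees the definition is accepted rather than raising an ``already defined'' error. Next I would record that \texttt{\textbackslash newcommand} with no optional argument (hence zero parameters) installs a macro whose replacement text is the brace-delimited token list supplied, here \texttt{\textbackslash end\{theorem\}}. The one-step expansion of \texttt{\textbackslash eth} then yields precisely these tokens, and pairing this with the earlier clause \texttt{\textbackslash newcommand\{\textbackslash bth\}\{\textbackslash begin\{theorem\}\}} shows that a block \texttt{\textbackslash bth} $\ldots$ \texttt{\textbackslash eth} is token-for-token equivalent to a genuine \texttt{theorem} environment.

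The main obstacle is not the expansion itself but the well-formedness of the source as literally worded: the displayed line terminates at \texttt{\textbackslash end\{theorem\}} and is missing its final closing brace, so taken verbatim it leaves the second argument of \texttt{\textbackslash newcommand} open and would abort compilation before the definition ever takes effect. The real work, then, is to supply (or confirm the presence of) that closing brace, and secondarily to check that \texttt{\textbackslash end} survives being stored unexpanded inside a replacement text; it does, since \texttt{\textbackslash end} is robust and is not triggered until \texttt{\textbackslash eth} is actually invoked in the body. Once the brace is restored the statement holds by definitional unfolding, and the only residual caution is the usual fragility of such abbreviations when they appear in moving arguments.
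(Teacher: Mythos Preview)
Your diagnosis is correct in the one respect that matters: the displayed ``statement'' is not a theorem of the paper at all. It is a fragment sliced out of two adjacent preamble lines, \texttt{\textbackslash newcommand\{\textbackslash bth\}\{\textbackslash begin\{theorem\}\}} and \texttt{\textbackslash newcommand\{\textbackslash eth\}\{\textbackslash end\{theorem\}\}}, and the paper contains no proof of it because there is nothing mathematical to prove. There is therefore no ``paper's own proof'' against which your proposal can be compared; the extraction has simply misfired.

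Given that, your write-up is a reasonable (if tongue-in-cheek) account of the operational semantics of \texttt{\textbackslash newcommand}, and your remark about the missing closing brace correctly explains why the excerpt, read verbatim, is syntactically incomplete. But none of this touches the mathematics of the paper, whose genuine results (Theorems~3.1 and~4.1) concern Hodge symmetry for smooth proper varieties and are proved via Hodge--Witt numbers, slope numbers, and Ekedahl's Mazur--Ogus criterion. The macros \texttt{\textbackslash bth} and \texttt{\textbackslash eth} are in fact never invoked in the body of the paper. If the intent was to review an actual result, you should request the correct statement rather than attempt to salvage this one.
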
}
\newcommand{\bpro}{\begin{prop}}
\newcommand{\epro}{\end{prop}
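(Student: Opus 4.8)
The plan is to prove the stated equivalence by running the cycle class map into $\ell$-adic étale cohomology for $\ell\neq p$ and into crystalline cohomology for $\ell=p$, and then showing that the finiteness assertion on one side is detected precisely by the cokernels of these maps on the other. Throughout, let $G=\operatorname{Gal}(\overline{k}/k)$.

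First I would treat the primes $\ell\neq p$. Twisting the Kummer sequence for $\mathbb{G}_m$ on $X$, passing to the inverse limit over $n$, and feeding the outcome into the Hochschild--Serre spectral sequence $H^p(G,H^q(\bX,-))\Rightarrow H^{p+q}(X,-)$ produces a short exact sequence relating $\Pic(X)\tensor\Z_\ell$ (more generally the relevant graded piece of $\ch^{\sbull}(X)\tensor\Z_\ell$), the Frobenius invariants $H^{2i}(\bX,\Z_\ell(i))^{G}$, and the $\ell$-adic Tate module of the obstruction group. By the Weil conjectures the Frobenius eigenvalue $1$ does not occur on $H^{j}(\bX,\Q_\ell(i))$ for $j\neq 2i$, so the only contribution to the invariants sits in degree $2i$; hence the cokernel of the cycle class map modulo $\tors$ measures exactly the failure of the rank equality, and its divisible part is the $\ell$-primary part of the obstruction group. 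Granting that this group is cofinitely generated, both implications follow $\ell$-adically.

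Second, and this is the substantive part, I would handle $\ell=p$ using crystalline cohomology $H^{2i}_{\cris}(X/W)$ with its semilinear Frobenius $F$: the role of ``Tate classes'' is now played by $H^{2i}_{\cris}(X/W)^{F=p^{i}}$, and the role of the Kummer sequence by the flat (fppf)/syntomic exact sequences computing the $p$-primary part. Using the slope decomposition and the de Rham--Witt description, one gets the $p$-adic analogue of the sequence above, identifying the $p$-part of the obstruction group with the divisible part of the cokernel of the $p$-adic cycle class map, and one checks that $\alg$-equivalence is governed in the same way. Combining the $p$-adic and $\ell$-adic computations with the cofinite generation of the obstruction group as a whole, finiteness at a single prime forces the rank equality at every prime, hence the Tate conjecture; the converse is immediate from the same sequences.

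The main obstacle will be the crystalline step. Unlike the clean Kummer-sequence argument for $\ell\neq p$, here one must control $p$-torsion and $p$-divisibility in flat and crystalline cohomology, establish (or reduce to) semisimplicity of $F$ on the slope-$i$ part, and carefully match the divisible part of the cokernel of the $p$-adic cycle class map with the $p$-component of the finiteness obstruction. This requires the comparison theorems between crystalline, de Rham--Witt and flat cohomology and is markedly more delicate than anything on the $\ell\neq p$ side.
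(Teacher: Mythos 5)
Your proposal does not address the statement at issue. The only proposition in this paper (Proposition~\ref{ekedahl-condition-lifting}) asserts that if $X/k$ is smooth and proper, admits a smooth proper lifting to $W(k)$, satisfies $p>\dim(X)$, and has torsion-free crystalline cohomology, then Hodge--Witt symmetry $h^{i,j}_W=h^{j,i}_W$ holds. What you have written is an outline of a completely different theorem --- the equivalence of the Tate conjecture with a finiteness statement for an obstruction group (essentially the Brauer-group/cycle-class-map circle of ideas), argued via the Kummer sequence, Hochschild--Serre, Frobenius invariants of $H^{2i}(\bX,\Z_\ell(i))$, and a crystalline/syntomic analogue at $p$. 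None of that machinery produces, or even bears on, an equality of Hodge--Witt numbers; at no point do you mention the slope spectral sequence, domino numbers $T^{i,j}$, slope numbers $m^{i,j}$, or the Mazur--Ogus condition, which are the actual ingredients.

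For the record, the paper's argument is short: one shows $H^*(X,W\Omega^{\sbull}_X)$ is a Mazur--Ogus object (using torsion-freeness of $H^*_{\cris}$ and degeneration of Hodge--de Rham, the latter supplied here by the lift to $W$ via Deligne--Illusie/Fontaine--Messing), which by Ekedahl gives $h^{i,j}_W=h^{i,j}$; then one uses the lift $Y/W$, the fact that the Hodge groups $H^j(Y,\Omega^i_{Y/W})$ are torsion-free direct summands, and base change to $\C$ together with classical Hodge symmetry to conclude $h^{i,j}=h^{j,i}$, hence $h^{i,j}_W=h^{j,i}_W$. You would need to start over along these lines; the cycle-class-map approach cannot be repaired to yield this statement.
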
}
\newcommand{\bp}{\begin{proof}}
\newcommand{\ep}{\end{proof}}
\newcommand{\blem}{\begin{lemma}}
\newcommand{\elem}{\end{lemma}}
\newcommand{\br}{\begin{remark}}
\newcommand{\er}{\end{remark}}

\title{Some remarks on Hodge symmetry}
\author{Kirti Joshi}
\address{Math. department, University of Arizona, 617 N Santa Rita, Tucson
85721-0089, USA.}
\email{kirti@math.arizona.edu}

\begin{abstract}
I make some remarks on Hodge symmetry, and prove for instance that if $X/k$ smooth, proper and Hodge-Witt, Hodge de Rham sequence of $X$ degenerates at $E_1$ and $X$ has torsion-free crystalline cohomology, then Hodge symmetry holds for $X$.
\end{abstract}

\maketitle
\begin{quote}
\hfill 
\end{quote}


\section{Introduction}
Let $X/k$ be a smooth proper variety. Suppose that the crystalline cohomology of $X$ is torsion-free and the Hodge-de Rham spectral sequence of $X$ degenerates at $E_1$.
For $p,q\geq 0$, let
$h^{p,q}=\dim H^q(X,\Omega^{p}_{X/k})$. I say that $X$ satisfies Hodge symmetry if 
\be\label{Hodge-symmetry}
\dim_kH^q(X,\Omega^{p}_{X/k})=
\dim_kH^p(X,\Omega^{q}_{X/k}).
\ee 
When $k=\C$, by the Hodge
decomposition theorem (see \cite[Chap 0, Section
7]{griffiths-harris}) 
these spaces  are complex conjugates and hence Hodge symmetry holds over complex numbers. 

        In this brief note I give an elementary, inasmuch as a proof using the principal results of \cite{deligne80, illusie79b, illusie83b, ekedahl-diagonal} can be construed to be elementary, of Hodge symmetry, when $X/k$ is a smooth, proper and Hodge-Witt with torsion-free crystalline cohomology over a perfect field $k$ of characteristic $p>0$.

		One can summarize my approach as follows. In \cite{ekedahl-diagonal} Ekedahl characterized the highest Hodge polygon (i.e., a convex polygon with integer slopes and break points) which lies below the Newton polygon (of $H^*_{cris}(X/W)$). This polygon is given explicitly as the slope-number polygon, so the Newton polygon lies on or above the slope-number polygon which lies on or above the Hodge polygon. My remark is that under suitable hypothesis on $X$, the Hodge and slope-number polygon coincide and Hodge symmetry reduces to slope-number symmetry. In \cite{ekedahl-diagonal} it was shown that slope number symmetry holds for any variety dominated by a smooth, projective variety (by \cite{deligne74,katz74}), and hence via Chow's Lemma and Abhyankar's resolution, for smooth, proper threefolds (with $p\geq 5$) using \cite{deligne80}. Let me remind the reader that Hodge symmetry implies that Betti numbers (i.e. de Rham numbers) are even in odd cohomological dimension. This property was proved  in \cite[Remark, page 112]{ekedahl-diagonal} for smooth, projective varieties and also for smooth, proper threefolds. Ekedahl used Chow's Lemma and resolution of singularities, to go from projective to proper, and hence he assumed dimension three (and $p\geq 5$). Recently this property of Betti numbers has been extended to smooth, proper case in all dimensions, independently in \cite{suh12}.  The proof of \cite[VI, Lemma 3.1(ii)]{ekedahl-diagonal} and \cite[Theorem 2.2.2]{suh12} are identical except for the replacement of resolution by de Jong's Theorem and so at any rate slope-number symmetry also extends to the smooth, proper case.
		
		 The technical tool which allows one to go between Hodge numbers and slope-numbers are the Hodge-Witt numbers (which can be negative, in general, unlike the Hodge numbers). One may think in terms of the following schematic (modulo Hodge de Rham degeneration and torsion-freeness of crystalline cohomology of $X$):
\bes\let\trm=\textrm\let\imp=\Longrightarrow
	\trm{Hodge-Witt+slope-number Sym}\imp\trm{Hodge-Witt Sym}\imp\trm{Hodge Sym}.
\ees	
		
		In \cite{joshi04} I proved Hodge symmetry holds for any smooth, projective threefold (whose Hodge de Rham spectral sequence degenerates and which have torsion-free crystalline cohomology), and also conjectured that Hodge-Witt symmetry holds under a suitable hypothesis. These results did not require that $X$ be Hodge-Witt. (It is clear upon examining that proof that one may replace projective by proper without affecting its conclusion.)

A smooth, proper variety over a perfect field $k$ is Hodge-Witt if $H^i(W\Omega_X^j)$ are finite type $W=W(k)$-modules for all $i,j\geq 0$ (here $W(k)$ is the ring of Witt-vectors of $k$). By \cite[Th\'eor\`eme 3.7]{illusie79b} this is equivalent to the degeneration of the slope spectral sequence at $E_1$. Ordinary varieties are Hodge-Witt. Examples of Hodge-Witt varieties are easy to construct.  Let $Y,Z/k$ be smooth, proper varieties and assume that $Y$ is ordinary, then $X=Y\times Z$ is Hodge-Witt. Any curve, any K3 surface of finite height is Hodge-Witt (on the other hand a K3 surface of infinite height is not Hodge-Witt); and so is any abelian variety of dimension $g$ and of $p$-rank at least $g-1$. 
Blowup of a smooth, proper, Hodge-Witt variety along a closed, Hodge-Witt subvariety is also Hodge-Witt. Finally note that the class of ordinary varieties is a proper subclass of Hodge-Witt varieties. 

In general, even if Hodge de Rham degenerates at $E_1$ and crystalline cohomology is torsion-free, the numbers $h^{i,j}$ do not agree with the slope-numbers $m^{i,j}$, so one cannot use symmetry of slope-numbers to deduce Hodge symmetry. 

To deduce Hodge symmetry for any smooth, proper $X/\C$ from the above characteristic $p>0$ argument, one needs the existence of one prime $p>\dim(X)$ at which $X$ has  good, Hodge-Witt reduction. Unfortunately this is not known, but I certainly expect it to be true. In fact in \cite{joshi00} C.~S.~Rajan and I have conjectured that there are infinitely many such primes. Jean-Pierre Serre has conjectured that there exist infinitely many primes of ordinary reduction (Serre's conjecture predates the conjecture Rajan and I make); but one does not know if there is even one prime $p>\dim(X)$ of ordinary or Hodge-Witt reduction.

\section{Recollections}
        To keep this note brief, I will refer to \cite{illusie79b},
\cite{illusie83b} and \cite{ekedahl84}, \cite{ekedahl85},
\cite{ekedahl-diagonal} for notations and basic results. In particular
I do not recall the notion of dominos here but make use of it.  
 In this section $X/k$ is a smooth
proper variety over a perfect field of characteristic $p>0$. Let
$H^j(X,W\Omega^i_X)$ be the Hodge-Witt cohomology groups of $X$. Let
$T^{i,j}$ be the dimension of the domino (see \cite[page
42]{illusie83a}) associated to the differential
$$H^j(X,W\Omega^i_X)\to H^j(X,W\Omega^{i+1}_X).$$ Let $m^{i,j}$ be the
slope numbers (see \cite[0, 6.1 and 6.2 (ii)]{ekedahl-diagonal}) associated to the slopes of Frobenius on
the crystalline cohomology of $X$. I recall the definition here for
the reader's convenience.  Let for any rational number $\lambda$, let
$h^n_{\cris,\lambda}$ be the dimension (=multiplicity) of the slope
$\lambda$ in $H^n_{\cris}(X/W)$. Then by definition of $m^{i,j}$ one has
\begin{equation}
m^{i,j}=\sum_{\lambda\in [i,i+1)}(i+1-\lambda)h^{i+j}_{\cris,\lambda}
        +\sum_{\lambda\in [i-1,i)}(\lambda-i+1)h^{i+j}_{\cris,\lambda}
\end{equation}

Then the
Hodge-Witt numbers of $X$ (see \cite[page 64]{illusie83a}), denoted
$h^{i,j}_W$, are defined to be
\begin{equation}\label{Hodge-witt-numbers}
h^{i,j}_W=m^{i,j}+T^{i,j}-2T^{i-1,j+1}+T^{i-2,j+2}.
\end{equation}

    Note that by \cite[I, 2.18.1]{illusie83b} $T^{i,j}$ is zero if the
corresponding differential of the slope spectral sequence is zero.

    The following symmetry of slope numbers is a consequence of
\cite{deligne80} and \cite[Theorem 1]{katz74} and is due to Ekedahl (see
\cite[VI, 3.1 (ii)]{ekedahl-diagonal}) for $X$ projective.
\begin{lemma}\label{slope-number-symmetry}
For any smooth, proper variety $X/k$ over a perfect field $k$ of
characteristic $p$ and for all $i,j$ one has
\begin{equation}
m^{i,j}=m^{j,i}.
\end{equation}
\end{lemma}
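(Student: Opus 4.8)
The plan is to reduce Lemma~\ref{slope-number-symmetry} to the symmetry of the Newton polygon of $H^n_{\cris}(X/W)$ about the line $\lambda=n/2$, that is, to the equalities
\[
h^n_{\cris,\lambda}=h^n_{\cris,\,n-\lambda}\qquad (n\ge 0,\ \lambda\in\Q),
\]
and then to prove these. For the reduction, fix $n=i+j$ and substitute $\mu=n-\lambda$ in the two sums defining $m^{j,i}$, using $h^n_{\cris,\mu}=h^n_{\cris,\lambda}$: the range $\mu\in[j,j+1)$ is carried to $\lambda\in(i-1,i]$ with weight $j+1-\mu=\lambda-i+1$, and $\mu\in[j-1,j)$ is carried to $\lambda\in(i,i+1]$ with weight $\mu-j+1=i+1-\lambda$. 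Comparing with the two sums defining $m^{i,j}$ (range $[i,i+1)$ with weight $i+1-\lambda$, and range $[i-1,i)$ with weight $\lambda-i+1$), the two expressions agree term by term: the only discrepancies occur at the integer endpoints $\lambda=i-1,i,i+1$, and there the relevant weight either vanishes ($\lambda=i\pm1$) or is counted identically ($\lambda=i$ receives total weight $1$ on both sides). Hence $m^{i,j}=m^{j,i}$, granting the displayed symmetry.

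I would first establish the displayed symmetry when $X$ is smooth and \emph{projective}, which is \cite[VI, 3.1(ii)]{ekedahl-diagonal}. Poincar\'e duality for crystalline cohomology realizes $H^{2d-n}_{\cris}(X/W)\otimes\Q$ as the dual $F$-isocrystal of $H^n_{\cris}(X/W)\otimes\Q$ up to the Tate twist $(-d)$, where $d=\dim X$. On the other hand, the hard Lefschetz theorem --- for $\ell$-adic cohomology this is \cite{deligne80}, and it transfers to crystalline cohomology through the comparison of $\ell$-adic and crystalline Frobenius in the style of Katz--Messing (and to a general perfect base field by the usual spreading-out and reduction to a finite field) --- gives an isomorphism of $F$-isocrystals $H^n_{\cris}(X/W)\otimes\Q\isom (H^{2d-n}_{\cris}(X/W)\otimes\Q)(d-n)$, namely $(d-n)$-fold cup product with the class of a hyperplane section. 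Composing the two identifications exhibits $H^n_{\cris}(X/W)\otimes\Q$ as isomorphic to its own dual twisted by $(-n)$; unwinding the effect on slopes (dualizing negates slopes, the twist $(m)$ shifts them by $-m$) yields precisely $h^n_{\cris,\lambda}=h^n_{\cris,\,n-\lambda}$, and \cite{katz74} supplies the slope-filtration bookkeeping needed to read this off. Equivalently, over a finite field $\F_q$ the Frobenius eigenvalues on $H^n$ come in pairs $\alpha,\ q^n/\alpha$, so their $p$-adic valuations are symmetric about $n/2$.

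To pass from projective to proper I would argue exactly as in \cite[Theorem 2.2.2]{suh12}, substituting de Jong's theorem on alterations for resolution of singularities: there is a proper, surjective, generically finite morphism $f\colon X'\to X$ with $X'$ smooth and projective of dimension $d$, and since $f_*f^*=\deg(f)$ the $F$-isocrystal $H^n_{\cris}(X/W)\otimes\Q$ is a direct summand of $H^n_{\cris}(X'/W)\otimes\Q$. By the projection formula the summands $f^*H^n$ and $f^*H^{2d-n}$ inside $H^n_{\cris}(X'/W)\otimes\Q$ and $H^{2d-n}_{\cris}(X'/W)\otimes\Q$ are exchanged, up to the factor $\deg f$, by the Poincar\'e pairing of $X'$; this is the compatibility that lets one restrict the self-duality of $H^n_{\cris}(X'/W)\otimes\Q$ obtained above to the summand coming from $X$, and so again $h^n_{\cris,\lambda}=h^n_{\cris,\,n-\lambda}$ for $X$.

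I expect the last step to be the real obstacle: slope symmetry of an $F$-isocrystal is \emph{not} inherited by an arbitrary subobject, quotient, or direct summand (a summand of a slope-symmetric isocrystal need not be slope-symmetric), so the content is precisely to check that the trace and pullback maps attached to the alteration $f$ are compatible with the Poincar\'e-duality and Lefschetz structures well enough that the self-duality survives on the summand $H^n_{\cris}(X/W)\otimes\Q$. Everything else --- the reduction to Newton polygon symmetry, the projective case, and the descent of the base field from a general perfect field to a finite field --- is routine given \cite{deligne80}, \cite{katz74}, \cite{ekedahl-diagonal} and \cite{suh12}.
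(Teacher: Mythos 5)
Your overall skeleton is the same as the paper's: reduce $m^{i,j}=m^{j,i}$ to the Newton-polygon symmetry $h^n_{\cris,\lambda}=h^n_{\cris,n-\lambda}$, prove that for smooth projective $X$ via Weil I and Katz--Messing, and pass to smooth proper $X$ by a de Jong alteration as in \cite[Theorem 2.2.2]{suh12}. (The paper's proof is essentially a pointer to \cite[VI, 3.1(ii)]{ekedahl-diagonal} and \cite{suh12}; your explicit endpoint bookkeeping in the substitution $\mu=n-\lambda$ is correct and is a useful detail the paper leaves implicit.)

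However, your execution of the proper case has a genuine gap, which you yourself flag but do not close. You propose to restrict the self-duality $H^n_{\cris}(X'/W)\otimes\Q\isom(H^n_{\cris}(X'/W)\otimes\Q)^\vee(-n)$, obtained from hard Lefschetz on the projective alteration $X'$, to the direct summand $f^*H^n_{\cris}(X/W)\otimes\Q$. This cannot work as stated: the Lefschetz operator is cup product with an ample class on $X'$, which in general does not come from $X$, so $L^{d-n}$ does not carry $f^*H^n(X)$ into $f^*H^{2d-n}(X)$; the projection formula only gives you the perfect pairing between $f^*H^n(X)$ and $f^*H^{2d-n}(X)$, i.e.\ Poincar\'e duality for $X$ itself, which relates degrees $n$ and $2d-n$ and says nothing about the slopes within a single degree. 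And, as you note, slope symmetry is not inherited by arbitrary summands, so the summand structure alone is useless. The argument that actually works (and is what \cite{katz74}, \cite[VI, 3.1]{ekedahl-diagonal} and \cite{suh12} do) avoids transporting any duality isomorphism: over $\F_q$, the injection $f^*$ shows that every Frobenius eigenvalue on $H^n_{\cris}(X/W)\otimes\Q$ is an eigenvalue on $H^n$ of the smooth projective $X'$, hence is a Weil number pure of weight $n$; combined with the rationality of the characteristic polynomial of Frobenius on $H^n_{\cris}(X/W)\otimes\Q$ (extracted from the zeta function via the Lefschetz trace formula, the factors being separated by purity), the eigenvalue multiset is stable under complex conjugation, i.e.\ under $\alpha\mapsto q^n/\alpha$, which gives $h^n_{\cris,\lambda}=h^n_{\cris,n-\lambda}$ directly. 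You should replace your third paragraph by this purity-plus-rationality argument; the rest of your proposal then goes through.
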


\bp
As was pointed out in the Introduction, in \cite{ekedahl-diagonal} it was shown that slope number symmetry holds for any variety dominated by a smooth, projective variety (by \cite{deligne74,katz74}). In particular it holds for smooth, projective varieties. Further it was shown using Chow's Lemma and Abhyankar's resolution, for smooth, proper threefolds (with $p\geq 5$) and \cite{deligne80} that the assertion also holds for smooth, proper threefolds. It is clear that one may replace resolution by de Jong's Theorem \cite{dejong96} (as was pointed out in \cite{suh12}). To prove the proper case, one needs to prove that various properties of crystalline Frobenius (in the projective case) extend to the smooth, proper case. Specifically one needs to verify that if $\lambda$ is a slope of Frobenius on $H^i_{cris}(X/W)$ then $i-\lambda$ is also a slope and of the same multiplicity as $\lambda$. This is proved in \cite[VI, Lemma 3.1(ii)]{ekedahl-diagonal} for projective $X$ and for $X$ proper with $\dim(X)\leq 3$ by Chow's lemma and resolution; and for $X$ proper  in \cite[Theorem~2.2.2]{suh12}. It should be noted that the proof of \cite[Theorem~2.2.2]{suh12} and that of slope-number symmetry \cite[VI, Lemma 3.1(ii)]{ekedahl-diagonal} are identical except for the use of de Jong's Theorem on alterations in the former and Abhyankar's resolution theorem in the latter.
\ep

\section{Hodge Symmetry}
\begin{theorem}\label{main2}
    Let $X/k$ be a smooth proper variety  over a perfect field $k$ of characteristic $p>0$.
Assume that 
\begin{enumerate}
\item $X$ is Hodge-Witt,
\item the crystalline cohomology of $X$ is torsion free,
\item Hodge de Rham spectral sequence of $X$ degenerates at $E_1$,
\end{enumerate} 
Then Hodge
symmetry \eqref{Hodge-symmetry} holds for $X/k$.
\end{theorem}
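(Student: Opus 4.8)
The plan is to reduce Hodge symmetry to the slope-number symmetry of Lemma~\ref{slope-number-symmetry}, using the Hodge-Witt numbers $h^{i,j}_W$ as the bridge. Under the three hypotheses, I claim that $h^{i,j}_W = h^{i,j}$ for all $i,j$, and then the conclusion is immediate: $h^{i,j} = h^{i,j}_W = m^{i,j} = m^{j,i} = h^{j,i}_W = h^{j,i}$, where the middle equality is Lemma~\ref{slope-number-symmetry}. So the entire content is to prove the identification $h^{i,j}_W = h^{i,j}$ under (1)--(3).

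To see this, first recall that $X$ being Hodge-Witt means the slope spectral sequence degenerates at $E_1$ (by \cite[Th\'eor\`eme 3.7]{illusie79b}), so every differential $H^j(X,W\Omega^i_X)\to H^j(X,W\Omega^{i+1}_X)$ of the slope spectral sequence vanishes. By \cite[I, 2.18.1]{illusie83b}, this forces all domino numbers $T^{i,j}$ to be zero. Plugging $T^{i,j}=0$ into \eqref{Hodge-witt-numbers} gives $h^{i,j}_W = m^{i,j}$. Separately, under hypotheses (2) and (3)---torsion-free crystalline cohomology and $E_1$-degeneration of the Hodge--de Rham spectral sequence---there is the general relation between Hodge-Witt numbers and Hodge numbers: one has $\sum_{i+j=n} h^{i,j}_W = \sum_{i+j=n}h^{i,j} = \dim_k H^n_{dR}(X/k)$ together with the finer comparison from \cite{illusie83b} (the Hodge and Newton polygons, via the slope-number polygon of \cite{ekedahl-diagonal}). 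The key point is that when $X$ is Hodge-Witt the slope-number polygon of $H^n_{cris}(X/W)$ has break points exactly at the integers and its side of horizontal width one over $[i,i+1]$ has vertical height $m^{i,n-i}$; combined with $E_1$-degeneration and torsion-freeness, Mazur's theorem (the Hodge polygon of crystalline cohomology equals the Hodge polygon built from the $h^{i,n-i}$) forces $m^{i,n-i} = h^{i,n-i}$ for every $i$. Thus $h^{i,j}_W = m^{i,j} = h^{i,j}$.

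Concretely, I would argue the equality $m^{i,j}=h^{i,j}$ slope by slope: Mazur--Ogus gives that the Newton polygon of $H^n_{cris}$ lies on or above its Hodge polygon (whose slopes, with multiplicity, are $h^{0,n},\dots,h^{n,0}$ under (2)--(3)), while Ekedahl's slope-number polygon is squeezed between them and has integer break points. For a Hodge-Witt $X$ the slope spectral sequence degeneration makes $H^j(X,W\Omega^i_X)\otimes_W K$ pure of slopes in $[i,i+1)$, so the multiplicity of slopes of $H^n_{cris}$ in $[i,i+1)$ is controlled by $H^{n-i}(X,W\Omega^i_X)$, and one computes that the total horizontal length of slopes $<i+1$ equals $\sum_{k\le i} h^{k,n-k}$; comparing with the Hodge polygon's break abscissae gives the partial-sum equalities $\sum_{k\le i} m^{k,n-k} = \sum_{k\le i} h^{k,n-k}$ for all $i$, hence $m^{i,j}=h^{i,j}$ termwise.

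The main obstacle is the identification $h^{i,j}_W=h^{i,j}$, and within it the step $m^{i,j}=h^{i,j}$: the inequalities between the Newton, slope-number and Hodge polygons are available off the shelf, but one must check that Hodge-Wittness really does force all three polygons to coincide and not merely that the extreme two agree in total length. The cleanest route is to invoke the structure of $H^j(X,W\Omega^i_X)$ as a finitely generated $W$-module (which is exactly the Hodge-Witt hypothesis), use that $H^n_{cris}(X/W)\otimes K = \bigoplus_{i+j=n} H^j(X,W\Omega^i_X)\otimes K$ with the $(i,j)$-summand having all slopes in $[i,i+1)$ (degenerate slope spectral sequence), and then match dimensions against the torsion-free, $E_1$-degenerate Hodge filtration via Mazur's theorem; the vanishing of the domino numbers $T^{i,j}$ then closes the loop with \eqref{Hodge-witt-numbers}. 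Once $h^{i,j}=h^{i,j}_W=m^{i,j}$ is established, Lemma~\ref{slope-number-symmetry} finishes the proof.
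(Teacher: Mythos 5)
Your top-level skeleton is exactly the paper's: $h^{i,j}=h_W^{i,j}=m^{i,j}=m^{j,i}=h_W^{j,i}=h^{j,i}$, with the middle equality supplied by Lemma~\ref{slope-number-symmetry} and with $h_W^{i,j}=m^{i,j}$ coming from the vanishing of all domino numbers in \eqref{Hodge-witt-numbers} once the slope spectral sequence degenerates. Those two steps are fine. The gap is in your justification of the remaining step $h^{i,j}_W=h^{i,j}$. The paper obtains it by checking that $H^*(X,W\Omega^{\sbull}_X)$ is a Mazur--Ogus object over the Cartier--Dieudonn\'e--Raynaud algebra (hypotheses (2) and (3) plus the universal coefficient theorem give $\text{rank}_W H^n_{cris}(X/W)=\sum_{i+j=n}h^{i,j}$) and then citing Ekedahl's theorem \cite[Corollary~3.3.1, page 86]{ekedahl-diagonal} that Mazur--Ogus objects satisfy $h_W^{i,j}=h^{i,j}$. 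You instead try to prove $m^{i,j}=h^{i,j}$ directly by a polygon argument, and that argument fails as written.

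Two concrete problems. First, the squeeze ``Hodge polygon $\leq$ slope-number polygon $\leq$ Newton polygon'' holds for any $X$ satisfying (2) and (3) but does not force the first two polygons to coincide: for a supersingular K3 surface (torsion-free crystalline cohomology, degenerate Hodge--de Rham) the slope-number polygon of $H^2$ is the straight line of slope $1$, strictly above the Hodge polygon, and indeed $m^{0,2}=0\neq 1=h^{0,2}$. So Hodge--Wittness must enter quantitatively, not merely through the polygon inequalities. Second, the way you bring it in --- identifying the multiplicity of slopes of $H^n_{cris}$ lying in $[i,i+1)$, that is $\dim_K H^{n-i}(X,W\Omega^i_X)\otimes K$, with $h^{i,n-i}$ --- is false even for Hodge--Witt varieties: for an abelian variety of dimension $g$ and $p$-rank $g-1$ (Hodge--Witt, per the paper's own list of examples) the slopes of $H^1_{cris}$ in $[0,1)$ are $0$ with multiplicity $g-1$ and $\tfrac12$ with multiplicity $2$, so that dimension is $g+1\neq g=h^{0,1}$; only the fractionally weighted sum $m^{0,1}=(g-1)+2\cdot\tfrac12=g$ matches $h^{0,1}$. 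Consequently the partial-sum identities $\sum_{k\leq i}m^{k,n-k}=\sum_{k\leq i}h^{k,n-k}$ are not available by your route. To close the argument, replace the polygon discussion by the Mazur--Ogus verification and the appeal to \cite[Corollary~3.3.1, page 86]{ekedahl-diagonal} (or supply an independent proof of that result, which is genuinely nontrivial).
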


\begin{proof}


    One first notes that $H^*(X,W\Omega^\sbull_{X})$ is a
Mazur-Ogus object in the derived category of bounded complexes of
modules over the Cartier-Dieudonne-Raynaud algebra. To see this
one observes that the slope spectral sequence computes the
crystalline cohomology of $X$ (see \cite[3.1.1, page
614]{illusie79b}) and that $H^*(X,W\Omega^\sbull_{X})$ is a
coherent module over the Cartier-Dieudonne-Raynaud algebra (see
\cite[II, Theorem 2.2]{illusie83b}). Since the
crystalline cohomology of $X$ is torsion free, so  by the
universal coefficient theorem one sees that:
$$\text{rank}_W H^n_{cris}(X/W)=\dim_k H^n_{dR}(X/k).$$ Finally as the
Hodge-de Rham spectral sequence of $X$ degenerates at $E_1$, one sees
that the number on the right is equal to $\sum_{i+j=n}\dim
H^i(X,\Omega^j_X)$.
Hence by the definition of Mazur-Ogus objects (see
\cite[IV, 1.1]{ekedahl-diagonal}) one sees that
$H^*(X,W\Omega^{\sbull}_X)$ is a Mazur-Ogus object. Hence one can
apply \cite[Corollary~3.3.1, page 86]{ekedahl-diagonal} to see that
$h^{i,j}_W=h^{i,j}$. 

Now the hypothesis that $X$ is Hodge-Witt means that the slope spectral sequence of
$X$ degenerates at $E_1$. In this event all the domino numbers $T^{i,j}=0$ and 
hence the Hodge-Witt numbers are given simply in terms of the slope numbers:
\be 
h_W^{i,j}=m^{i,j}.
\ee
Next I observe that Hodge-Witt symmetry holds for $X$:
\be 
h^{i,j}_W=h^{j,i}_W.
\ee
This is a consequnce of the symmetry of slope numbers (see Lemma~\ref{slope-number-symmetry}).
Hence one has 
\bes 
	h^{i,j}=h_W^{i,j}=h_W^{j,i}=h^{j,i}.
\ees This proves my assertion.
\end{proof}

\section{Hodge symmetry over $\C$}
Now assume $k=\C$ and $X$ is smooth, proper over $\C$.  
Unfortunately one cannot use Theorem~\ref{main2} to deduce the Hodge symmetry holds over complex numbers, as I do not know if there always exists a prime $p$, of good reduction for $X$, such that the reduction of $X$ is Hodge-Witt with torsion-free crystalline cohomology. In \cite{joshi00} C.S.~Rajan and I have conjectured that in fact there are infintely such primes. Jean-Pierre Serre has conjectured that there are infintely many primes of ordinary (and therefore Hodge-Witt) reduction. Let me record the  what can be proved under weakest possible hypothesis on $X$:

\begin{theorem}\label{main}
Let $X/\C$ be a smooth proper variety over complex numbers.  Suppose that there exists one prime $p>\dim(X)$ such that 
\begin{enumerate}
\item $X$ has good, Hodge-Witt reduction at $p$,
\item the de Rham cohomology of $X$ is torsion-free at $p$.
\end{enumerate}
Then Hodge symmetry
\eqref{Hodge-symmetry} holds for $X$.
\end{theorem}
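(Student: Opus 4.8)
The plan is to spread $X$ out over a finitely generated $\Z$-algebra, reduce modulo a maximal ideal lying over $p$, apply Theorem~\ref{main2} to the reduction, and then transport the resulting Hodge symmetry back to $X$ by a dimension count. Concretely, I would first choose a finitely generated subring $R\subset\C$ (automatically $\Z$-flat, being torsion-free over $\Z$) and a smooth proper morphism $\mathcal X\to\operatorname{Spec}R$ with $\mathcal X\otimes_R\C\cong X$, arranged so that the hypotheses at $p$ are witnessed by a maximal ideal $\wp\subset R$ with finite residue field $\F_q=R/\wp$ of characteristic $p$: the special fibre $X_0:=\mathcal X\otimes_R\F_q$ is smooth, proper and Hodge--Witt, and hypothesis (2) is read as saying that the crystalline cohomology $H^n_{\cris}(X_0/W(\F_q))$ is a torsion-free $W(\F_q)$-module for all $n$. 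Here $\F_q$ is perfect and $\dim X_0=\dim X<p$.

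Next I would verify the three hypotheses of Theorem~\ref{main2} for $X_0/\F_q$. Conditions (1) and (2) hold by construction, so the point is condition (3): degeneration at $E_1$ of the Hodge--de Rham spectral sequence of $X_0$. For this I would invoke Deligne--Illusie \cite{deligne80}. The scheme $\mathcal X\otimes_R\bigl(\widehat{\mathcal O}_{R,\wp}/p^2\bigr)$ is smooth and proper over the ring $\widehat{\mathcal O}_{R,\wp}/p^2$, which is flat over $\Z/p^2$ because $\widehat{\mathcal O}_{R,\wp}$ is flat over $\Z$; reducing it modulo the maximal ideal recovers $X_0$. Thus $X_0$ lifts over a flat $\Z/p^2$-algebra with residue field $\F_q$, and since $\dim X_0<p$ the de Rham complex of the lifted family decomposes in the derived category; this decomposition is compatible with base change, so it specialises to $X_0/\F_q$ and gives the required degeneration.

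Theorem~\ref{main2} now yields Hodge symmetry for $X_0$, i.e. $h^{i,j}(X_0)=h^{j,i}(X_0)$ for all $i,j$; it remains to transfer this to $X$. Put $b_n:=\dim_\C H^n_{dR}(X/\C)$, the $n$-th Betti number. By smooth proper base change and the comparison of crystalline with $\ell$-adic (or Betti) cohomology, $\operatorname{rk}_{W(\F_q)}H^n_{\cris}(X_0/W(\F_q))=b_n$, and since this crystalline cohomology is torsion-free the universal coefficient theorem gives $\dim_{\F_q}H^n_{dR}(X_0/\F_q)=b_n$. Hodge--de Rham degeneration for $X$ over $\C$ (classical, or \cite{deligne80}) gives $\sum_{i+j=n}h^{i,j}(X)=b_n$, and the degeneration just proved for $X_0$ gives $\sum_{i+j=n}h^{i,j}(X_0)=\dim_{\F_q}H^n_{dR}(X_0/\F_q)=b_n$. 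Finally, upper semicontinuity of the Hodge numbers in the flat proper family $\mathcal X\to\operatorname{Spec}R$ gives $h^{i,j}(X_0)\ge h^{i,j}(X)$ for every $(i,j)$, the Hodge numbers of $X$ being those of the generic fibre. Summing over $i+j=n$ and comparing with $\sum_{i+j=n}h^{i,j}(X)=\sum_{i+j=n}h^{i,j}(X_0)=b_n$ forces these inequalities to be equalities, so $h^{i,j}(X)=h^{i,j}(X_0)$ for all $i,j$, whence $h^{i,j}(X)=h^{i,j}(X_0)=h^{j,i}(X_0)=h^{j,i}(X)$, which is \eqref{Hodge-symmetry}.

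The step I expect to require the most care is the Hodge--de Rham degeneration for the reduction $X_0$: this is the one ingredient genuinely beyond the hypotheses of Theorem~\ref{main2}, and it is exactly where the assumption $p>\dim X$ is used, through Deligne--Illusie. (Torsion-freeness of the crystalline cohomology of $X_0$ by itself does not force this degeneration, which is why Theorem~\ref{main2} lists the two conditions separately.) The other points --- pinning down the meaning of ``torsion-free at $p$'', the base-change compatibilities of de Rham cohomology between $\C$ and $\F_q$, and the semicontinuity of Hodge numbers --- are standard, but all are needed to make the concluding count of dimensions rigorous.
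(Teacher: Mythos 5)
Your proposal is correct and follows essentially the same route as the paper's (omitted but only sketched) proof: spread $X$ out over a finite-type $\Z$-algebra, use the Deligne--Illusie criterion to get Hodge--de Rham degeneration on the special fibre (note your citation should point to Deligne--Illusie, \emph{Rel\`evements modulo $p^2$}, not to Weil II, i.e.\ not to \cite{deligne80}), read off torsion-freeness of crystalline cohomology from the crystalline--de Rham comparison, and apply Theorem~\ref{main2}. The only, immaterial, difference is in transporting the conclusion back to $X$: you use upper semicontinuity of the Hodge numbers together with the equality $\sum_{i+j=n}h^{i,j}=b_n$ on both fibres, whereas the paper instead shrinks the base so that the relative Hodge and de Rham cohomology sheaves are locally free and commute with base change.
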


\begin{proof} (of Theorem~\ref{main})
This is done by a standard argument which allows one to reduce to Theorem~\ref{main2} and will be omitted.    
\end{proof}

\br 
Suppose $\dim(X)=n$ and $X/\C$ is projective. Serre duality gives $h^{i,n-i}=h^{n-i,i}$. So Hodge symmetry for $X$ is a non-trivial assertion for cohomology in dimension $\leq n-1$; and moreover Hodge symmetry is always true for $H^1(X)$ (Weil for curves or by reduction to abelian varieties) at any rate one can prove Hodge symmetry for $H^1(X)$ purely algebraically. Hence one deduces (purely algebraically) that Hodge symmetry holds for a surface. If Hodge symmetry is established for any smooth, projective variety of dimension $\leq n-1$, then by Lefschetz hyperplane section theorem, one deduces Hodge symmetry holds for a variety of dimension $n$ but in cohomology dimension $< n-1$ (one notes that by Deligne-Illusie,  Kodaira vanishing over $\C$  can be proved algebraically, and hence (weak) Lefshetz). So any inductive argument will eventually have to deal with $H^{n-1}(X)$ algebraically. This I do not know how to do at the moment.
\er


\end{document}